\tikzset{Rightarrow/.style={double equal sign distance,>={Implies},->},
triple/.style={-,preaction={draw,Rightarrow}},
quadruple/.style={preaction={draw,Rightarrow,shorten >=0pt},shorten >=1pt,-,double,double
distance=0.2pt}}
\theoremstyle{plain}
\newtheorem{theorem}{Theorem}[subsection]
\newtheorem{proposition}[theorem]{Proposition}
\theoremstyle{definition}
\newtheorem{definition}[theorem]{Definition}
\newtheorem{notation}{Definition}[section]
\theoremstyle{remark}
\newtheorem*{remark}{Remark}
\newcommand{\R}{\mathbf{R}}
\newcommand{\K}{\mathbf{K}}
\newcommand{\End}{\mathrm{End}}
\newcommand{\EndK}{\End_{\K}}
\newcommand{\Algcat}{\mathsf{Algebra}}
\newcommand{\intervals}{\mathsf{INT}}
\newcommand{\comalg}{\mathsf{ComAlgebra}}
\newcommand{\catCN}{\mathsf{CN}}
\newcommand{\catconstCN}{\mathsf{cCN}}
\newcommand{\hombicat}{T}
\newcommand{\homcat}{S}
\newcommand{\bbox}{\scalebox{0.3}{$\blacksquare$}}
\newcommand{\radius}{0.4ex}
\newcommand{\whiteball}{{\tikz{\draw[black] (0,0) circle (\radius);}}}
\newcommand{\intw}{\intervals_\whiteball}
\newcommand{\blackball}{{\tikz{\fill[black] (0,0) circle (\radius);}}}
\newcommand{\intb}{\intervals_\blackball}
\newcommand{\blackwhiteball}{{\tikz{\draw (0,0) circle (\radius);
			\fill[black] (0,\radius) arc[start angle=90, end angle=-90, radius=\radius] -- (0,0) -- cycle;}}}
\newcommand{\intgb}{\intervals_{\blackwhiteball}}
\newcommand{\intbw}{\intervals_{\whiteball \blackball}}
\newcommand{\homog}{\mathfrak{h}}
\newcommand{\homogb}{\homog_{\black}}
\newcommand{\homogw}{\homog_{\white}}
\newcommand{\intervalsS}{\intervals^{\mathsf{circle}}}
\newcommand{\intbwS}{\intervalsS_{\whiteball \blackball}}
\newcommand{\intbwST}{\intbwS(\top)}
\newcommand{\intbwSB}{\intbwS(\bot)}
\newcommand{\col}{c}
\newcommand{\stdcirc}{\mathcal{S}^1}
\newcommand{\white}{\whiteball}
\newcommand{\black}{\blackball}
\newcommand{\blackwhite}{\blackwhiteball}
\newcommand{\blackwhiteT}{\blackwhite^\top}
\newcommand{\blackwhiteB}{\blackwhite^\bot}
\newcommand{\funb}{\Phi_\blackball}
\newcommand{\funw}{\Phi_\whiteball}
\newcommand{\funT}{\Phi^\top}
\newcommand{\funB}{\Phi^\bot}
\newcommand{\vonneumannalgebra}[1]{#1}
\newcommand{\vnA}{\vonneumannalgebra{A}}
\newcommand{\vnB}{\vonneumannalgebra{B}}
\newcommand{\vnC}{\vonneumannalgebra{C}}
\newcommand{\op}{\mathrm{op}}
\newcommand{\conformalnet}[1]{\mathcal{#1}}
\newcommand{\cfA}{\conformalnet{A}}
\newcommand{\cfB}{\conformalnet{B}}
\newcommand{\cfC}{\conformalnet{C}}
\newcommand{\defect}[1]{#1}
\newcommand{\dD}{\defect{D}}
\newcommand{\dE}{\defect{E}}
\newcommand{\dF}{\defect{F}}
\newcommand{\dG}{\defect{G}}
\newcommand{\dfus}{\circledast}
\newcommand{\sigmaw}{\sigma_\whiteball}
\newcommand{\sigmab}{\sigma_\blackball}
\newcommand{\sigmawD}{\sigmaw^\dD}
\newcommand{\sigmabD}{\sigmab^\dD}
\newcommand{\sigmawE}{\sigmaw^\dE}
\newcommand{\sigmabE}{\sigmab^\dE}
\newcommand{\sigmawF}{\sigmaw^\dF}
\newcommand{\sigmabF}{\sigmab^\dF}
\newcommand{\vacuum}{L^2}
\newcommand{\hilb}{\mathcal{H}}
\newcommand{\hilbK}{\mathcal{K}}
\newcommand{\actD}{\rho_\dD}
\newcommand{\actE}{\rho_\dE}
\newcommand{\actF}{\rho_\dF}
\newcommand{\actG}{\rho_\dG}
\newcommand{\actH}{\rho^\hilb}
\newcommand{\actK}{\rho^\hilbK}
\newcommand{\id}{\mathrm{id}}
\newcommand{\unitcat}{\mathbf{1}}
\title{Algebraic conformal nets, investigation of the locally constant case - M1 internship supervised by Domenico Fiorenza}
\author{Quentin Moreau}
\date{May-July 2025}
\begin{document}

\maketitle

\tableofcontents

\begin{abstract}
We define the tricategory of algebraic conformal nets, defects, sectors and intertwiners where algebraic refers to the absence of a topology on the relevant algebras and modules.
We aim at making these definitions satisfying from a categorical point of view, and in a few cases, e.g., for defects, we propose more general definitions than what can be found in the literature.
This allows us in particular to propose an implementation of solitons as defects.
Then we focus on the locally constant case where we provide an identification of the tricategory of locally constant algebraic conformal nets, defects, sectors and intertwiners with the tricategory of commutative algebras, algebras, bimodules and bimodule homomorphisms.
\end{abstract}

\section{Introduction}

Conformal nets play a pivotal role in the mathematical formulation of quantum field theory.
They have been widely studied and developed by Kawahigashi and Longo \cite{Longo}.
A conformal net assigns to each interval of the standard circle a von Neumann algebra acting on a Hilbert space.
Recently, Bartels, Douglas and Henriques \cite{CN-I} proposed a "coordinate-free" version of conformal nets in which intervals of the standard circle have been replaced by abstract intervals, i.e., manifolds diffeomorphic to a segment.
Henriques showed that these coordinate-free conformal nets are examples of factorization algebras \cite{CNFA}, a structure that appears in various approaches to quantum field theory.
In particular, Lurie showed that locally constant $n$-dimensional factorization algebras are in correspondence with $E_n$-algebras \cite[Theorem 5.4.5.9]{Lurie}.

Bartels, Douglas and Henriques created a whole tricategory of coordinate-free conformal nets, defects, sectors and intertwiners \cite{CN-IV}.
In \cite[Section 4.A]{CN-I}, the same authors give a correspondence between coordinate-free conformal nets and the more usual circle-based conformal nets.
However, they do not extend such a correspondence to the whole tricategory.
Moreover, concrete objects such as the standard circle still play a role in otherwise abstract definitions, leading to a conceptual competition between concreteness and abstractness.
For instance, the definition given by Bartels, Douglas and Henriques \cite[Chapter 2.A]{CN-III} of sectors relies on the standard circle.
Thus, we aim to continue this categorification of conformal nets by providing definitions that are more satisfying from a categorical point of view, while using only concrete objects.

Moreover, conformal nets and defects satisfy algebraic and topological axioms.
For instance, a defect $\dD$ should satisfy the strong additivity property: for any intervals $I$ and $J$ covering an interval $K$, $\dD(I)$ and $\dD(J)$ topologically generate $\dD(K)$.
However, when focusing on locally constant conformal nets and defects, such topological properties appear to be simpler or even trivial.
So, in order to focus on the algebraic aspects of the construction, we introduce a notion of algebraic conformal nets, defects, sectors and intertwiners which are released from topological constraints.
Then we reduce to the locally constant case.
This allows us to address the lack of explicit examples from \cite{CN-III}.
Indeed, in the locally constant case, conformal nets, defects, sectors and intertwiners have a natural algebraic interpretation: we show that in the locally constant case their tricategory is equivalent to the tricategory of commutative algebras, algebras, bimodules and bimodule homomorphisms.

\section{Notations}

\begin{notation} \label{def:algebra}
    We fix a field $\K$.
    We write $\Algcat$ the category of associative algebras over the field $\K$ whose morphisms are algebra homomorphisms.
    Any algebra will be assumed to be an object of $\Algcat$.
\end{notation}

\begin{notation}[Bicolored circle and circle intervals]
    Let $\stdcirc$ be the trigonometric circle together with the trigonometric orientation and a coloring map $\col \colon I \to \{\white, \black, \blackwhiteB, \blackwhiteT\}$ such that $\col(i) = \blackwhiteT$, $\col(-i) = \blackwhiteB$, $\col$ sends the left half of $\stdcirc$, $\{e^{i\theta}, \theta\in]\frac{\pi}{2},\frac{3\pi}{2}[\}$, on $\white$ and its right half, $\{e^{i\theta}, \theta\in]-\frac{\pi}{2},\frac{\pi}{2}[\}$, on $\black$.
    
    A circle interval is an oriented compact simply connected submanifold $I$ of $\stdcirc$ (its orientation might not be the one induced by $\stdcirc$) and such that if $I$ contains $\kappa$ then it contains a neighborhood of $\kappa$ for any $\kappa \in \{-i,i\}$.
    
    We write $\intbwS$ the category whose objects are circle intervals and morphisms are color-preserving orientation-preserving embeddings i.e., orientation-preserving embeddings $f \colon I \to J$ such that $c \circ f = c_{|I}$.
    We write $\intbwST$ (or $\intbw$) the category of upper (or bicolored) circle intervals, i.e., intervals $I$ that do not contain $-i$.
    Similarly, we write $\intbwSB$ the category of lower circle intervals.
    We write $\intw$ the category of white intervals, i.e., the intervals $I$ contained in the left half of $\stdcirc$.
    Similarly, we write $\intb$ the category of black intervals.
    We write $\intgb$ the category of genuinely bicolored intervals, i.e., upper intervals that contain $i$.

    Let us write $j \colon \stdcirc \to \stdcirc$ the orientation-reversing reflection along the horizontal axis.
    We write $\funT \colon \intbw \to \intbwST$ the identity functor.
    Moreover, we write $\funT \colon \intbw \to \intbwSB$ the functor that sends $I$ on $j(I)$, and sends $f \colon I\hookrightarrow J$ to $j \circ f \circ j$.

    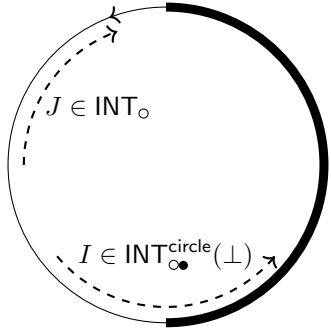
\begin{figure}[h!]
        \centering
        \begin{tikzpicture}[scale=0.7]
    
            \draw (0,0) circle(3);

            \draw[->, thick] ([shift=(110:3)]0,0) arc[start angle=110, end angle=111, radius=3];
            
            \draw[line width=1.2mm] ([shift=(-90:3)]0,0) arc[start angle=-90, end angle=90, radius=3];

            \draw[dashed, thick, ->] ([shift=(-140:2.7)]0,0) arc[start angle=-140, end angle=-40, radius=2.7];
            \node at ([shift=(-90:1.7)]0,0) {$I \in \intbwSB$};

            \draw[dashed, thick, ->] ([shift=(180:2.7)]0,0) arc[start angle=180, end angle=110, radius=2.7];
            \node at ([shift=(140:1.7)]0,0) {$J \in \intw$};

        \end{tikzpicture}
        \caption{Some circle intervals}
        \label{fig:circint}
    \end{figure}
\end{notation}

\begin{notation}[Intervals] \label{def:realintervals}
    We write $\R$ the real line oriented from the negative reals to the positive reals.
    
    An interval is an oriented compact connected submanifold of $\R$,
    that can be positively-oriented --- in which case it inherits the orientation from $\R$ --- or negatively-oriented --- in which case its orientation is opposite to the one induced by $\R$.
    We say that $J$ is a subinterval of an interval $I$ if $J \subset I$. 
    We write $\intervals$ the category whose objects are intervals and whose morphisms are orientation-preserving embeddings.
    
    We write
    \[
    \homogb \colon \begin{array}{ccc}
        \R & \longrightarrow & \{e^{i\theta}, \theta \in ]-\frac{\pi}{2},\frac{\pi}{2}[\} \\
        x & \longmapsto & \exp(i \arctan(x))
    \end{array}
    \]
    the orientation-preserving map that identifies $\R$ with the black half of $\stdcirc$.
    Similarly, we write $\homogw = - \homogb$ the orientation-preserving map that identifies $\R$ with the white half of $\stdcirc$.

    We write $\funw \colon \intervals \to \intw$ the functor that sends any interval $I$ on $\homogw(I)$ and any embedding $f$ to $\homogw \circ f \circ \homogw^{-1}$.
    Similarly, we write $\funb$ the functor induced by $\homogb$.
\end{notation}

\section{The tricategory of algebraic conformal nets} \label{sec:algCN}

\subsection{Algebraic conformal nets}

The tricategory of coordinate-free conformal nets, defects, sectors and intertwiners has been defined by Bartels, Douglas and Henriques \cite{CN-III}.
Their definitions imply topological and algebraic constraints.
In \Cref{sec:algCN}, we build algebraic conformal nets, defects, sectors and intertwiners, in order to capture only the algebraic constraints.
Therefore, our objects are released from the usual topological axioms.
Indeed, a first thing we do is to replace von Neumann algebras by abstract associative algebras, with no topological structure on them.
That is why we introduced \Cref{def:algebra}.

In the definitions of \cite{CN-I}, the notation $\vacuum$ stands for a functor that assigns to each von Neumann algebra $A$ a certain Hilbert $A$--$A$-bimodule $\vacuum A$ --- its Haagerup $\vacuum$-space defined by Haagerup \cite{haag} --- such that $\vacuum\vnA\otimes -$ is the identity of the object $A$ in the bicategory of von Neumann algebras and Hilbert bimodules. 
This is necessary because morphisms in this bicategory need to be Hilbert spaces, and the naive $A$--$A$-bimodule $A$ would not fit since a von Neumann algebra is generally not a Hilbert space.
However, in our simplified case where we consider associative algebras without topological properties, we do not have this subtlety and an algebra $A$ seen as an $A$--$A$-bimodule will be the identity functor on the object $A$ in the Morita bicategory $\mathsf{Mor}$ of algebras, bimodules and intertwiners.
The bicategory $\mathsf{Mor}$ has been introduced by Morita \cite{Morita}.
One can find a modern approach to $\mathsf{Mor}$ in the work of Brouwer \cite[Proposition 2.3.1]{Brouwer}.

Therefore, we define algebraic conformal nets as follows:

\begin{definition}[Algebraic conformal net] \label{def:algcn}
    An algebraic conformal net $\cfA$ is a functor from the category $\intervals$ to $\Algcat$ such that $\cfA(\overline{I}) = \cfA(I)^\op$ for any interval $I$ where $\overline{I}$ denotes $I$ with the reversed orientation.
    Let $I$ and $J$ be subintervals of an interval $K$.
    We write $i\colon I \hookrightarrow K$ and $j\colon J \hookrightarrow K$ the inclusion maps.
    Then the following conditions must be met:
    
    \begin{enumerate}

        \item Isotony: $\cfA(f)$ is injective for any embedding $f$.
        
        \item Locality: If $i(I)$ and $j(J)$ have disjoint interiors in $K$, then $\cfA(i)(\cfA(I))$ and $\cfA(j)(\cfA(J))$  are mutually commuting subalgebras of $\cfA(K)$.
        \label{def:algcn:it:loc}
        
        \item Strong additivity: If $K = I \cup J$ then $\cfA(K) = \cfA(i)(\cfA(I)) \vee \cfA(j)(\cfA(J))$.
    \end{enumerate}

\end{definition}

\begin{remark}
    Let us preserve the notations from \Cref{def:algcn}.
    It is often convenient to not distinguish $\cfA(I)$ and $\cfA(i)(\cfA(I))$.
    We might want then to write "$\cfA(I)$ or its image" (or only $\cfA(I)$) where "its image" implicitly refers to its image through $\cfA(i)$.
    In the next definitions, we will make such abuse of notations, but remember that our axioms never impose preservation of inclusion, i.e., $\cfA(I)$ is not necessarily a subalgebra of $\cfA(K)$.
\end{remark}

\begin{remark}
    Let $\cfA$ be a conformal net and $f\colon I \hookrightarrow J$ be an orientation-reversing embedding.
    Then $f$ induces an orientation-preserving embedding $\overline{f} \colon I \hookrightarrow \overline{J}$ where $\overline{J}$ denotes $J$ with the reversed orientation.
    Hence, $\cfA(\overline{f}) \colon \cfA(I) \to \cfA(J)^\op$ is well-defined and induces a linear antihomomorphism $\cfA(f) \colon \cfA(I) \to \cfA(J)$.
\end{remark}

\subsection{Algebraic defects}

We now give an algebraic definition of defects.
We aim to provide a more categorical definition than the one given by \cite[Definition 1.7]{CN-III} thanks to the functors defined in \Cref{def:realintervals} and the use of natural isomorphisms:

\begin{definition}[Algebraic defects] \label{def:algdef}
    Let $\cfA$ and $\cfB$ be algebraic conformal nets.
    An algebraic defect between $\cfA$ and $\cfB$ (or an algebraic $\cfA$--$\cfB$ defect) is a functor $\dD$ from $\intbw$ to $\Algcat$ together with natural isomorphisms $\sigmaw \colon \cfA \Leftrightarrow \dD \circ \funw$ and $\sigmab \colon \cfB \Leftrightarrow \dD \circ \funb$:
    \[
    \begin{tikzcd}
    & & \Algcat & & \\
    & {} \arrow[rd, "\sigmaw", Leftrightarrow, shorten >=10pt, shorten <=10pt] & & {} \arrow[ld, "\sigmab"', Leftrightarrow, shorten >=10pt, shorten <=10pt] & \\
    \intervals \arrow[rr, "\funw"] \arrow[rruu, "\cfA"] & & \intbw \arrow[uu, "\dD"'] & & \intervals \arrow[ll, "\funb"'] \arrow[lluu, "\cfB"']
    \end{tikzcd}
    \]
    The functor $\dD$ is such that $\dD(\overline{I}) = \dD(I)^\op$ for any interval $I$ where $\overline{I}$ denotes $I$ with the reversed orientation.
    Then for any bicolored subintervals $I$, $J$ of an interval $K$, the following conditions must be met:
    \begin{enumerate}
        \item Isotony: If $f \colon I \hookrightarrow J$ is an embedding of genuinely bicolored intervals, then $\dD(f)$ is injective.
        
        \item Locality: \label{def:algdef:it:loc}
        If $I$ and $J$ have disjoint interiors, then the images of $\dD(I)$ and $\dD(J)$ are mutually commuting subalgebras of $\dD(K)$.

        \item Strong additivity: \label{def:algdef:it:sa}
        If $K = I \cup J$ then $\dD(K) = \dD(I) \vee \dD(J)$.
    \end{enumerate}

\end{definition}

For a defect $(\dD,\sigmaw,\sigmab)$, we will write simply $\dD$ when $\sigmaw$ and $\sigmab$ are clear from the context.

\begin{remark}
    In \cite{CN-III}, it is suggested that a soliton (an endomorphism localized in a half-line) could be implemented as a defect.
    Indeed, Henriques established deep connections between solitons and conformal nets in \cite{bicommutantcat} and \cite{ChernSimons}.
    However, it is not clear what a coordinate-free soliton would be.
    Here, with the categorical definition proposed in \Cref{def:algdef}, a soliton can be thought of as a particular case of an $\cfA$--$\cfA$ defect $(\dD,\sigmaw,\sigmab)$ in which $\sigmaw$ or $\sigmab$ is the identity.
    If both $\sigmaw$ and $\sigmab$ are the identity, then we recover the definition of \cite{CN-III}.
    Therefore, it is natural to relax the condition of commutativity of the diagram from \Cref{def:algdef} and ask only commutativity up to natural isomorphisms.
    Moreover, this categorical definition might be used to avoid case-by-case definitions such as the one given by \cite{Janssens}.
\end{remark}

The fusion $\vnA \dfus_\vnC \vnB$ of two von Neumann algebras $\vnA$ and $\vnB$ over a third one $\vnC$ is defined in terms of faithful Hilbert modules for $\vnA$ and $\vnB$ in \cite[Definition 1.20]{CN-I}.
In our algebraic setting, we are allowed to consider the action of an algebra $\vnA$ on itself through multiplication.
Thus, this simplifies to the following:

\begin{definition}[Fusion of associative algebras]\label{def:fusvn}
    Let $\vnA$, $\vnB$ and $\vnC$ be associative algebras such that we are given homomorphisms of algebras $\vnC^\op \to \vnA$ and $\vnC \to \vnB$.
    Then we define the fusion $\vnA \dfus_\vnC \vnB$ of $\vnA$ and $\vnB$ over $\vnC$ as $(\vnA \cap {\vnC^{\op}}') \vee (\vnC' \cap \vnB)$ seen as a subalgebra of $\EndK(\vnA \otimes_\vnC \vnB)$ and where the commutants of $\vnC^{\op}$ and $\vnC$ are taken respectively in $\vnA$ and $\vnB$.
\end{definition}

\begin{definition}[Fusion of defects] \label{def:fusdef}
    Let $\cfA$, $\cfB$ and $\cfC$ be algebraic conformal nets.
    Let $(\dD,\sigmawD,\sigmabD)$ be an $\cfA$--$\cfB$ defect and $(\dE,\sigmawE,\sigmabE)$ be a $\cfB$--$\cfC$ defect.
    We define their fusion $\dD \dfus_\cfB \dE$.

    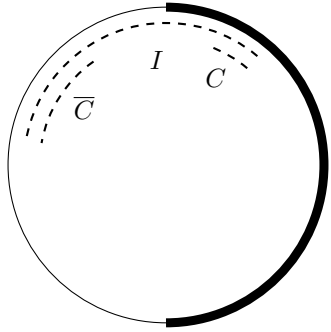
\begin{figure}[h!]
        \centering
        \begin{tikzpicture}[scale=0.7]
    
            \draw (0,0) circle(3);
            
            \draw[line width=1.2mm] ([shift=(-90:3)]0,0) arc[start angle=-90, end angle=90, radius=3];

            \draw[dashed, thick] ([shift=(50:2.7)]0,0) arc[start angle=50, end angle=170, radius=2.7];
            \node at ([shift=(95:2)]0,0) {$I$};

            \draw[dashed, thick] ([shift=(50:2.4)]0,0) arc[start angle=50, end angle=70, radius=2.4];
            \node at ([shift=(60:1.9)]0,0) {$C$};

            \draw[dashed, thick] ([shift=(125:2.4)]0,0) arc[start angle=125, end angle=170, radius=2.4];
            \node at ([shift=(145:1.9)]0,0) {$\overline{C}$};

        \end{tikzpicture}
        \caption{Fusion of defects}
        \label{fig:fusdef}
    \end{figure}

    Let $I$ be a bicolored interval.
    If $I$ is white then $\dD \dfus_\cfB \dE (I) = \dD(I)$.
    If $I$ is black then $\dD \dfus_\cfB \dE (I) = \dE(I)$.

    Let us suppose $I$ genuinely bicolored.
    Then, write $\partial I = \{e^{i(\frac{\pi}{2}-\theta_1)},e^{i(\frac{\pi}{2}+\theta_2)}\}$ such that $\theta_1$ and $\theta_2$ belong to $]0,\pi[$.
    We write $C = \{e^{i(\frac{\pi}{2}-\theta)}, \theta \in [\frac{\theta_1}{2},\theta_1]\}$ and $\overline{C} = \{e^{i(\frac{\pi}{2}+\theta)}, \theta \in [\frac{\theta_2}{2},\theta_2]\}$ (see \Cref{fig:fusdef}).
    We write
    \[
    f \colon
    \begin{array}{ccc}
        C & \longrightarrow & \overline{C} \\
        \exp(i(\frac{\pi}{2}-\theta)) & \longmapsto & \exp(i(\frac{\pi}{2}+\frac{\theta_2}{\theta_1}\theta))
    \end{array}
    \]
    the orientation-reversing map that sends $C$ onto $\overline{C}$.
    Then we write $J$ the preimage of $C$ under $\homogb$.
    Then, $\sigmaw(J)$ induces an algebra homomorphism from $\cfB(J)$ to $\dE(I)$ and $\sigmaw(\homogw^{-1} \circ f \circ \homogb (J))$ induces an algebra antihomomorphism from $\cfB(J)$ to $\dD(I)$.

    Therefore, we can define $\dD \dfus_\cfB \dE (I)$ as $\dD(I) \dfus_{\cfB(J)} \dE(I)$.

\end{definition}

In \cite{CN-III}, it is conjectured that the fusion of two defects gives a defect.
This conjecture is proved when $\cfB$ has finite index.
However, this hypothesis is used only to prove that $\dD \dfus_\cfB \dE$ satisfies the "vacuum sector" property that is absent from our algebraic setting.
Therefore, we can state the following:

\begin{proposition}
    The fusion of two algebraic defects is an algebraic defect.
\end{proposition}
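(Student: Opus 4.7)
The plan is to verify that $\dD \dfus_\cfB \dE$ carries the full structure of an algebraic $\cfA$--$\cfC$ defect, in the sense of \Cref{def:algdef}. This means extending the object assignment of \Cref{def:fusdef} to a functor $\intbw \to \Algcat$, constructing natural isomorphisms $\sigmaw \colon \cfA \Leftrightarrow (\dD \dfus_\cfB \dE) \circ \funw$ and $\sigmab \colon \cfC \Leftrightarrow (\dD \dfus_\cfB \dE) \circ \funb$, checking the orientation-reversal identity, and then verifying isotony, locality and strong additivity.

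I begin with the functorial action on morphisms. For an embedding $f$ between white (resp. black) intervals I set $(\dD \dfus_\cfB \dE)(f) = \dD(f)$ (resp. $\dE(f)$). For an embedding $f \colon I \hookrightarrow I'$ between genuinely bicolored intervals, the situation is more delicate: the arcs $C_I, \overline{C}_I \subset I$ and $C_{I'}, \overline{C}_{I'} \subset I'$ chosen in \Cref{def:fusdef} do not nest under $f$, so the auxiliary intervals $J \subset \R$ and the associated algebras $\cfB(J)$ are not directly comparable. The crucial preliminary step is to show that $\dD(I) \dfus_{\cfB(J)} \dE(I)$ is canonically independent of the choice of $J$ among small intervals near the color boundary of $I$: shrinking $J$ enlarges both commutants $(\cfB(J)^{\op})'$ and $\cfB(J)'$, but strong additivity of $\dD$ and $\dE$ forces the two factors of the fusion to stabilize. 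Applied to a common refinement of $J$ and the image of the corresponding interval for $I'$, this yields a canonical algebra homomorphism that I take as $(\dD \dfus_\cfB \dE)(f)$.

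On the image of $\funw$ we have $(\dD \dfus_\cfB \dE) \circ \funw = \dD \circ \funw$ by construction, so I take $\sigmaw = \sigmawD$; symmetrically $\sigmab = \sigmabE$. Naturality in the interval is inherited from the naturality of $\sigmawD$ and $\sigmabE$. The identity $(\dD \dfus_\cfB \dE)(\overline{I}) = ((\dD \dfus_\cfB \dE)(I))^{\op}$ follows from the analogous identities for $\dD$ and $\dE$ together with the straightforward fact that the fusion construction $(\vnA \cap {\vnC^{\op}}') \vee (\vnC' \cap \vnB)$ of \Cref{def:fusvn} is compatible with taking opposites and with swapping the two factors.

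Finally, for the three axioms: isotony reduces, on embeddings between genuinely bicolored intervals, to the observation that injectivity of $\dD(f)$ and $\dE(f)$ is preserved when passing to the subalgebra of $\EndK(\dD(I) \otimes_{\cfB(J)} \dE(I))$ cut out by the two commutants. Locality and strong additivity for subintervals $I_1, I_2 \subset K$ split into cases on the colors of $I_1$ and $I_2$: when both are monochromatic of the same color the statements follow directly from the corresponding properties for $\dD$ or $\dE$; in the mixed case one uses that the $\dD$-factor and the $\dE$-factor commute by construction (locality) and jointly generate the fusion (strong additivity). The main obstacle, and essentially the only genuinely non-formal step, is the independence of the fusion from the choice of the auxiliary interval $J$ and the arc $C$; once that has been secured, the functoriality, the natural isomorphisms and the three axioms all follow by case analysis on colors and direct transport of the corresponding properties of $\dD$ and $\dE$.
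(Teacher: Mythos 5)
Your overall strategy --- extend the object assignment of \Cref{def:fusdef} to a functor on morphisms, take $\sigmaw=\sigmawD$ and $\sigmab=\sigmabE$, and verify the three axioms by a case analysis on colors --- is the natural one, and you correctly isolate where the real difficulty lies: the algebra $\dD(I)\dfus_{\cfB(J)}\dE(I)$ is defined via an auxiliary arc $C$ and interval $J$ depending on $I$, and these choices do not nest under an embedding of genuinely bicolored intervals. The problem is that your resolution of this difficulty is asserted rather than proved, and the assertion does not follow from the axioms you invoke. If $J'\subset J$, strong additivity only gives $\cfB(J)=\cfB(J')\vee\cfB(J'')$ with $J''$ the closure of $J\setminus J'$, i.e.\ it says that commuting with $\cfB(J)$ is a \emph{stronger} condition than commuting with $\cfB(J')$; it gives no upper bound on $\dD(I)\cap{\cfB(J')^{\op}}'$ and hence no reason for the two factors of the fusion to stabilize. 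Locality and strong additivity yield only the inclusion $\dD(I\setminus C^{\circ})\subseteq \dD(C)'\cap\dD(I)$; the reverse inclusion is a Haag-duality-type statement, which in \cite{CN-I} and \cite{CN-III} is extracted from the vacuum sector axiom and is \emph{not} among the axioms of \Cref{def:algcn} or \Cref{def:algdef}. Moreover, even granting stabilization of the two factors, the ambient algebras $\EndK(\dD(I)\otimes_{\cfB(J)}\dE(I))$ and $\EndK(\dD(I)\otimes_{\cfB(J')}\dE(I))$ differ, so you would additionally need the canonical surjection $\dD(I)\otimes_{\cfB(J')}\dE(I)\to\dD(I)\otimes_{\cfB(J)}\dE(I)$ to be an isomorphism, which is again not automatic. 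So the step you yourself call the only genuinely non-formal one is left open.

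For comparison, the paper does not verify the axioms directly: it observes that the statement is proved in \cite{CN-III} for conformal nets of finite index, that the finite-index hypothesis enters that proof only through the vacuum-sector property of the fused defect, and that this property has no counterpart in the algebraic setting, so the remaining arguments carry over. If you want a self-contained argument along your lines, you would need either to add a duality axiom to \Cref{def:algdef} strong enough to force the stabilization you use, or to reformulate the fusion so that it is manifestly independent of the choice of $C$ (for instance as a colimit over shrinking arcs), and only then derive functoriality and the three axioms by the case analysis you outline.
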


\subsection{Algebraic sectors and intertwiners}

In this subsection, we define algebraic sectors and intertwiners.

Let $\mathsf{Core(Vect)}$ be the core of the category of vector spaces over $\K$, i.e., the category whose objects are vector spaces over $\K$ and whose morphisms are linear isomorphisms. We have a distinguished functor
\[
\EndK\colon \mathsf{Core(Vect)} \to \Algcat
\]
mapping a vector space $V$ to its algebra of endomorphisms $\EndK(V)$ and an isomorphism $f\colon V\to W$ to the following algebra isomorphism:
\[
\begin{array}{ccc}
\EndK(V) & \to & \EndK(W)\\
\phi & \mapsto & f\circ \phi\circ f^{-1}
\end{array}.
\]

\begin{definition}[Algebraic sectors] \label{def:sector}
    Let $(\dD,\sigmawD,\sigmabD)$ and $(\dE,\sigmawE,\sigmabE)$ be $\cfA$--$\cfB$ defects.
    We define an algebraic sector between $\dD$ and $\dE$ (or a $\dD$--$\dE$ sector) as a constant functor $\hilb \colon \intbwS \to \mathsf{Core(Vect)}$ together with natural transformations $\actD \colon \dD \Rightarrow \EndK \circ \hilb \circ \funT$ and $\actE \colon \dE \Rightarrow \EndK \circ \hilb \circ \funB$ as shown in the following diagram:
    \[
    \begin{tikzcd}[column sep=0.5cm, row sep=0.3cm]
    \intbw \arrow[ddddd, "\funT"'] \arrow[rrrrrrddddd, "\dD"] &    &                                     &  &  &   &      \\
    &    &                                     &  &  &      &   \\
    &    &                                     &  &  &     &    \\
    &    & {} \arrow[lldd, "\actD"', Rightarrow, shorten >=10pt, shorten <=10pt] &  &  &         \\
    & {} &                                     &  &  &    &     \\
    \intbwS \arrow[rrr, "\hilb"]                                    &    &                                     &\mathsf{Core(Vect)}\arrow[rrr, "\EndK"]  &  & &\Algcat \\
    & {} &                                     &  &  &   &      \\
    &    & {} \arrow[lluu, "\actE", Rightarrow, shorten >=10pt, shorten <=10pt]  &  &  &         \\
    &    &                                     &  &  &   &      \\
    &    &                                     &  &  &    &     \\
    \intbw \arrow[uuuuu, "\funB"] \arrow[rrrrrruuuuu, "\dE"'] &    &                                     &  &  &       & 
    \end{tikzcd}
    \]
    For any inclusion of circle intervals $i \colon J \hookrightarrow I$, such that $I \in \intbwST$, we require that a natural \textit{compatibility} isomorphism $\actD(I) \circ \dD(i) \Leftrightarrow \hilb(i) \circ \actD(J)$ exists as shown in the following diagram:
    \[
    \begin{tikzcd}
    & \dD(I) \arrow[rd, "\actD(I)"] \arrow[dd, Leftrightarrow, shorten >=15pt, shorten <=15pt] &          \\
    \dD(J) \arrow[rd, "\actD(J)"'] \arrow[ru, "\dD(i)"] &                                                      & \hilb(I) \\
    & \hilb(J) \arrow[ru, "\hilb(i)"']                     &         
    \end{tikzcd}
    \]
    The analogous condition holds for $\dE$ and $\intbwSB$.

    Moreover, $(\hilb,\actD,\actE)$ must satisfy the following locality property for any upper and lower, respectively, circle subintervals $I$ and $J$ of a tricolored interval $K$: if $I$ and $J$ have disjoint interiors, then the images of $D(I)$ and $E(J)$ are mutually commuting subalgebras of $\EndK(\hilb(K))$.
\end{definition}

For a sector $(\hilb,\actD,\actE)$, we will write $(\hilb,\actH)$ or even just $\hilb$ when the natural isomorphisms are clear from the context.

\begin{remark}
    Let us consider a sector $(\hilb,\actD,\actE)$ and a bicolored interval $I$.
    Notice then that the morphism $\actD\colon D(I)\to \EndK(\hilb(I))$ endows $\hilb(I)$ with a structure of left $D(I)$-module. Similarly, due to the orientation reversal in the definition of $\funB$, the morphism $\actE$ makes $\hilb(I)$ a right $E(I)$-module.
    This makes $\hilb(I)$ a $D(I)$--$E(I)$-bimodule.
\end{remark}

Thanks to the previous remark, we can now define vertical and horizontal fusions:

\begin{definition}[Vertical fusion of sectors] \label{def:vfussec}
    Let $\cfA$ and $\cfB$ be conformal nets, and $(\dD,\sigmawD,\sigmabD)$, $(\dE,\sigmawE,\sigmabE)$ and $(\dF,\sigmawF,\sigmabF)$ be $\cfA$--$\cfB$ defects.
    Let $(\hilb,\actH)$ and $(\hilbK,\actK)$ be $\dD$--$\dE$ and $\dE$--$\dF$ sectors respectively.
    We define the vertical fusion of $\hilb$ and $\hilbK$ by the following diagram:
 \[
    \begin{tikzcd}[column sep=0.5cm, row sep=0.3cm]
    \intbw \arrow[ddddd, "\funT"'] \arrow[rrrrrrddddd, "\dD"] &    &                                     &  &  &   &      \\
    &    &                                     &  &  &      &   \\
    &    &                                     &  &  &     &    \\
    &    & {} \arrow[lldd, "\actD\otimes \mathrm{id}"', Rightarrow, shorten >=10pt, shorten <=10pt] &  &  &         \\
    & {} &                                     &  &  &    &     \\
    \intbwS \arrow[rrr, "\hilb\otimes_{\dE(I)}\hilbK"]                                    &    &                                     &\mathsf{Core(Vect)}\arrow[rrr, "\EndK"]  &  & &\Algcat \\
    & {} &                                     &  &  &   &      \\
    &    & {} \arrow[lluu, "\mathrm{id}\otimes \actF", Rightarrow, shorten >=10pt, shorten <=10pt]  &  &  &         \\
    &    &                                     &  &  &   &      \\
    &    &                                     &  &  &    &     \\
    \intbw \arrow[uuuuu, "\funB"] \arrow[rrrrrruuuuu, "\dF"'] &    &                                     &  &  &       & 
    \end{tikzcd}
    \]
    where $I$ is the genuinely bicolored interval $\{e^{i\theta}, \theta\in[0,\pi]\}$.

\end{definition}

\begin{definition}[Horizontal fusion of sectors] \label{def:hfussec}
    Let $\cfA$, $\cfB$ and $\cfC$ be conformal nets, $\dD$ and $\dE$ be $\cfA$--$\cfB$ defects and $\dF$ and $\dG$ be $\cfB$--$\cfC$ defects.
    Let $\hilb$ be a $\dD$--$\dE$ sector and $\hilbK$ be a $\dF$--$\dG$ sector.
    We define the horizontal fusion of $\hilb$ and $\hilbK$ by the following diagram:
 \[
    \begin{tikzcd}[column sep=0.5cm, row sep=0.3cm]
    \intbw \arrow[ddddd, "\funT"'] \arrow[rrrrrrddddd, "\dD \dfus_\cfB \dF"] &    &                                     &  &  &   &      \\
    &    &                                     &  &  &      &   \\
    &    &                                     &  &  &     &    \\
    &    & {} \arrow[lldd, "\actD \dfus_\cfB \actF"', Rightarrow, shorten >=10pt, shorten <=10pt] &  &  &         \\
    & {} &                                     &  &  &    &     \\
    \intbwS \arrow[rrr, "\hilb\otimes_{\cfB(I)} \hilbK"]                                    &    &                                     &\mathsf{Core(Vect)}\arrow[rrr, "\EndK"]  &  & &\Algcat \\
    & {} &                                     &  &  &   &      \\
    &    & {} \arrow[lluu, "\actE \dfus_\cfB \actG", Rightarrow, shorten >=10pt, shorten <=10pt]  &  &  &         \\
    &    &                                     &  &  &   &      \\
    &    &                                     &  &  &    &     \\
    \intbw \arrow[uuuuu, "\funB"] \arrow[rrrrrruuuuu, "\dE \dfus_\cfB \dG"'] &    &                                     &  &  &       & 
    \end{tikzcd}
    \]
    where $\actD \dfus_\cfB \actF$ and $\actE \dfus_\cfB \actG$ are the natural isomorphisms induced by $\actD$, $\actF$, $\actE$ and $\actG$ and $I$ is the white interval $\{e^{i\theta}, \theta\in[\frac{3\pi}{4},\frac{5\pi}{4}]\}$.

\end{definition}

\begin{definition}[Algebraic intertwiners]
    Let $\cfA$ and $\cfB$ be conformal nets.
    Let $\dD$ and $\dE$ be $\cfA$--$\cfB$ defects.
    Let $(\hilb,\actH)$ and $(\hilbK,\actK)$ be $\dD$--$\dE$ defects.
    An intertwiner between $\hilb$ and $\hilbK$ is a linear map $\phi\colon \hilb \to \hilbK$ such that $\phi \circ \actH = \actK \circ \phi$.
\end{definition}


\begin{definition}[Transversal fusion of intertwiners]\label{def:tfusint}
    Let $\phi\colon \hilb \to \hilb'$ and $\psi\colon \hilb' \to \hilb''$ be intertwiners. We define their transversal fusion as the intertwiner $\psi \circ \phi$.
\end{definition}

\begin{definition}[Vertical fusion of intertwiners]\label{def:vfusint}
    Let $\cfA$ and $\cfB$ be conformal nets, $\dD$, $\dE$, and $\dF$ be $\cfA$--$\cfB$ defects, $\hilb$ and $\hilbK$ be $\dD$--$\dE$ sectors, $\hilb'$ and $\hilbK'$ be $\dE$--$\dF$ sectors.
    For any intertwiners $\phi\colon \hilb \to \hilbK$ and $\psi\colon \hilb' \to \hilbK'$, we define their vertical fusion $\phi \otimes_\dE \psi$ as the intertwiner from $\hilb \otimes_\cfB \hilbK$ to $\hilb' \otimes_\cfB \hilbK'$ induced by $\phi$ and $\psi$.
\end{definition}

\begin{definition}[Horizontal fusion of intertwiners]\label{def:hfusint}
    Let $\cfA$, $\cfB$ and $\cfC$ be conformal nets, $\dD$ and $\dE$ be $\cfA$--$\cfB$ defects, $\dF$ and $\dG$ be $\cfB$--$\cfC$ defects, $\hilb$ and $\hilb'$ be $\dD$--$\dE$ sectors and $\hilbK$ and $\hilbK'$ be $\dF$--$\dG$ sectors.
    For any intertwiners $\phi\colon \hilb \to \hilb'$ and $\psi\colon \hilbK \to \hilbK'$, we define their horizontal fusion $\phi \otimes_\cfB \psi$ as the intertwiner from $\hilb \otimes_\dE \hilbK$ to $\hilb' \otimes_\dE \hilbK'$ induced by $\phi$ and $\psi$.
\end{definition}

So far we have recalled the main definitions from \cite{CN-III}, adapting them to an algebraic and circle-based context. The arguments in \cite{CN-III} and \cite{CN-IV} verbatim adapt (actually simplify) to this context, so that the main result of \cite{CN-IV} provides the following:
\begin{theorem}
     Algebraic conformal nets, algebraic defects, algebraic sectors and algebraic intertwiners form a tricategory.
\end{theorem}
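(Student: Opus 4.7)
The plan is to mirror the tricategorical construction of \cite{CN-IV} while systematically replacing each occurrence of a von Neumann algebra and its Haagerup $\vacuum$-space by a plain associative algebra and its regular bimodule over itself. Every such replacement simplifies the verification, so the goal is not to produce new coherence arguments but to exhibit, at each level, the algebraic ingredient that inherits its coherence from the Morita bicategory $\mathsf{Mor}$ recalled in \Cref{sec:algCN}.

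I would first organize the cells. Objects are algebraic conformal nets; for fixed $\cfA$ and $\cfB$, the hom-bicategory $\hombicat(\cfA,\cfB)$ has $\cfA$--$\cfB$ defects as $0$-cells, sectors as $1$-cells (composed vertically via \Cref{def:vfussec}) and intertwiners as $2$-cells (composed transversally and vertically via \Cref{def:tfusint} and \Cref{def:vfusint}). The key observation is that evaluation at a fixed genuinely bicolored interval $I$ sends a $\dD$--$\dE$ sector to a $\dD(I)$--$\dE(I)$-bimodule and sends vertical fusion to the relative tensor product $\otimes_{\dE(I)}$; the compatibility isomorphisms built into \Cref{def:sector} guarantee that no information is lost by this evaluation. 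Consequently, the bicategory axioms for $\hombicat(\cfA,\cfB)$ reduce to those of $\mathsf{Mor}$. The identity sector on a defect $\dD$ is $I \mapsto \dD(I)$ with its canonical bimodule action, and the identity defect on $\cfA$ is $\cfA$ itself extended across $\intbw$ using $\funw$ and $\funb$ with $\sigmaw = \sigmab = \id$. Horizontal composition $\dfus_\cfB$ is then obtained from \Cref{def:fusvn} and \Cref{def:fusdef}; its associators, unitors and interchange $2$-cells are supplied by the standard canonical isomorphisms for iterated relative tensor products, and the interchange between horizontal and vertical fusion (\Cref{def:hfussec} vs.\ \Cref{def:vfussec}) is the isomorphism $(M \otimes_B M') \otimes_C (N \otimes_B N') \cong (M \otimes_C N) \otimes_B (M' \otimes_C N')$, which exists because the two actions commute by the locality property \Cref{def:algdef}(\ref{def:algdef:it:loc}).

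The main work, and the only place where a nontrivial check is needed, lies in the tricategorical coherence axioms themselves: the pentagonator identity and the three interchanger axioms spelled out in \cite{CN-IV}. I would transcribe those diagrams verbatim and argue cell by cell that, once each Haagerup identification in them collapses to an identity, every face reduces either to Mac Lane coherence for $\otimes_A$ in $\mathsf{Mor}$ or to naturality of the interchange isomorphism under associators. Naturality in the defect data $\sigmaw, \sigmab$ is automatic, since these are, by construction, morphisms in the relevant functor categories, and relative tensor products preserve such morphisms. The hardest aspect will be the bookkeeping: keeping track of which interval each algebra lives over, and of the compatibility isomorphisms linking $\dD \circ \funw$ to $\cfA$ when fusing over middle nets. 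No step, however, involves more than an instance of coherence in $\mathsf{Mor}$ together with the fact that our defects and sectors are, pointwise, objects of that bicategory equipped with coherent boundary identifications.
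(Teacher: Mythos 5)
Your proposal takes essentially the same route as the paper, which simply observes that the arguments of \cite{CN-III} and \cite{CN-IV} adapt verbatim (and simplify) to the algebraic setting once Haagerup $\vacuum$-spaces are replaced by regular bimodules and coherence is inherited from the Morita bicategory. Your write-up is in fact more detailed than the paper's own justification, and the extra detail (evaluation at a genuinely bicolored interval, interchange via commuting actions from locality) is consistent with the intended argument.
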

We will denote this tricategory by the symbol $\catCN$.

\section{The tricategory $\catconstCN$ of locally constant algebraic conformal nets} \label{sec:loccons}

Topological constraints on conformal nets appear to be trivial or at least simpler in the locally constant case.
Thus, the algebraic tricategory $\catCN$ is a natural setting where to  study the locally constant case.
In \Cref{sec:loccons}, we aim at giving a completely algebraic description of the  subcategory of $\catCN$ consisting of locally constant algebraic conformal nets.

\subsection{Reducing $\catCN$ to the locally constant case} \label{ssec:loccons}

First, we investigate what objects and morphisms from $\catCN$ reduce to under the locally constancy condition.

For conformal nets, we get:

\begin{definition}[Locally constant conformal nets]
    A conformal net $\cfA$ is locally constant if $\cfA$ maps any positively-oriented interval to the same algebra $\vnA$ and all embeddings of positively-oriented intervals to the identity of $\vnA$.
\end{definition}
Since 
all intervals are diffeomorphic to each other, we see that a locally constant algebraic conformal net is in particular the datum of an algebra $\vnA$ together with an involutive antiisomorphism $\overline{(\,\,)}_\vnA\colon \vnA\to \vnA^\op$: for every interval $I$ one has $\cfA(I)=\vnA$, for every
embedding $i\colon I\to J$ of intervals with same orientation (both positively or both negatively-oriented) we have $\cfA(i)=\id_{\vnA}$ and for every
embedding $\overline{i}\colon I\to J$ of intervals with opposite orientations we have $\cfA(\overline{i})=\overline{(\,\,)}_\vnA$. For this reason we will simply write $\vnA$ instead of $\cfA$ for a locally constant algebraic conformal net.

A first thing we can notice is the following:

\begin{proposition} \label{prop:constCNcomm}
    The unique associative algebra $A$ in a locally constant conformal net is commutative.
\end{proposition}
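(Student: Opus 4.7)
The plan is to invoke the Locality axiom of \Cref{def:algcn} in the simplest possible configuration and let local constancy collapse everything to a tautology on $\vnA$. First, I would fix a concrete positively-oriented interval $K$, say $K=[0,1]\subset\R$, and write it as the union of two positively-oriented subintervals $I=[0,\tfrac12]$ and $J=[\tfrac12,1]$ whose interiors are disjoint in $K$. The inclusion maps $i\colon I\hookrightarrow K$ and $j\colon J\hookrightarrow K$ are then orientation-preserving embeddings between positively-oriented intervals, so they are morphisms of $\intervals$.

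Next I would apply local constancy: since $I$, $J$ and $K$ are all positively-oriented, one has
\[
\cfA(I)=\cfA(J)=\cfA(K)=\vnA,
\]
and $\cfA(i)=\cfA(j)=\id_{\vnA}$. Hence the subalgebras $\cfA(i)(\cfA(I))$ and $\cfA(j)(\cfA(J))$ of $\cfA(K)$ are both equal to $\vnA$ itself, viewed inside $\vnA$.

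Now by item \ref{def:algcn:it:loc} of \Cref{def:algcn} (Locality), $\cfA(i)(\cfA(I))$ and $\cfA(j)(\cfA(J))$ are mutually commuting subalgebras of $\cfA(K)$. Substituting, this says that $\vnA$ commutes with $\vnA$ inside $\vnA$, i.e.\ $[a,b]=0$ for all $a,b\in\vnA$, which is exactly the statement that $\vnA$ is commutative.

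There is essentially no obstacle: once one notices that local constancy forces the two structure maps entering the Locality axiom to be the identity on the \emph{same} algebra $\vnA$, the conclusion is immediate. The only point requiring a sentence of care is the orientation bookkeeping --- one must choose $I$, $J$, $K$ with the same orientation so that the inclusions are morphisms in $\intervals$ and so that they act as $\id_\vnA$ rather than as the involution $\overline{(\,\,)}_\vnA$. With that remark in place, the proof reduces to a single application of the axiom.
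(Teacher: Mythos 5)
Your proof is correct and is exactly the argument the paper has in mind: the paper's own proof consists of the single sentence ``This is a direct consequence of the locality property,'' and your write-up simply makes explicit the decomposition $K=I\cup J$ and the fact that local constancy forces both structure maps to be $\id_\vnA$ with image all of $\vnA$. No issues.
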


\begin{proof}
    This is a direct consequence of the locality property.
\end{proof}

Since $\vnA$ is commutative, we have $\vnA^\op = \vnA$, and so $\overline{(\,\,)}_\vnA\colon \vnA\to \vnA$ is an involutive algebra automorphism. In particular, $\overline{(\,\,)}_\vnA= \id_\vnA$ is a valid choice. In this case one has that $\cfA$ maps every morphism in $\intervals$ to the identity of $\vnA$. When this happens we say that $\cfA$ is a locally constant \emph{unoriented} algebraic conformal net. In what follows we will only deal with the unoriented case. The definitions and constructions for the oriented case are verbatim adapted from the unoriented case.

Because of \Cref{prop:constCNcomm}, we will talk about commutative algebras instead of associative algebras when dealing with a locally constant conformal net.

Then defects reduce to the following in the locally constant case:

\begin{definition}[Locally constant defects]\label{def:lcdefect}
    Let $\vnA$ and $\vnB$ be locally constant conformal nets.
    A defect $(\dD,\sigmaw,\sigmab)$ between $\vnA$ and $\vnB$ is said to be locally constant if  $D$ maps every orientation-preserving morphism in any of the subcategories $\intw$, $\intb$, and $\intgb$ of $\intbw$ to an identity morphism, is constant on the set of morphisms from objects in $\intw$ to  objects in $\intgb$, is constant on the set of morphism from objects in $\intb$ to  ojects in $\intgb$, and
    $\sigmaw$ and $\sigmab$ are identities.
\end{definition}

\begin{remark}
   In what follows a defect between locally constant conformal nets will always be assumed to be locally constant.
\end{remark}

As constant conformal nets can be seen as commutative algebras, there is a natural algebraic description of locally constant defects.  This is the content of the following proposition.

\begin{proposition}
    For any constant conformal nets $\vnA$ and $\vnB$, defects between $\vnA$ and $\vnB$ are pairs $(D,\varphi)$ where $D$ is an algebra and $\varphi\colon \vnA \otimes \vnB\to Z(D)$ is an algebra homomorphism from $\vnA \otimes \vnB$ to the center of $D$.
\end{proposition}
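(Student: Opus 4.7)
The plan is to establish a bijection between locally constant defects $(\dD, \sigmaw, \sigmab)$ from $\vnA$ to $\vnB$ and pairs $(D, \varphi)$ as in the statement.

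Given such a defect, local constancy lets one extract: an algebra $D := \dD(K)$ for any genuinely bicolored $K$, which is well-defined since $\dD$ is constant on $\intgb$ and sends all morphisms in $\intgb$ to identities; and algebra homomorphisms $\varphi_\vnA\colon \vnA \to D$ and $\varphi_\vnB\colon \vnB \to D$ obtained as the common value of $\dD(i)$ for any embedding $i$ of a white (resp. black) interval into a genuinely bicolored one. Here one uses $\sigmaw = \id$ and $\sigmab = \id$ to identify $\vnA$ with $\dD \circ \funw$ and $\vnB$ with $\dD \circ \funb$.

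The key step is to show $\varphi_\vnA(\vnA) \subseteq Z(D)$ and $\varphi_\vnB(\vnB) \subseteq Z(D)$. Fix a genuinely bicolored $K$ and a white subinterval $I \subset K$; one can choose a second genuinely bicolored subinterval $J \subset K$ whose interior is disjoint from that of $I$ — concretely, $J$ covers the closure of the complement of $I$ in $K$, which necessarily contains the black part of $K$ together with a neighborhood of $i$. Locality applied to the pair $(I, J)$ in $K$ forces the images of $\dD(I) = \vnA$ and $\dD(J) = D$ in $\dD(K) = D$ to commute; since the second map is the identity on $D$, this gives $\varphi_\vnA(\vnA) \subseteq Z(D)$. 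A symmetric argument yields $\varphi_\vnB(\vnB) \subseteq Z(D)$. As both images sit in the commutative subalgebra $Z(D)$, they commute mutually, and therefore combine into a single homomorphism $\varphi\colon \vnA \otimes \vnB \to Z(D)$ by $a \otimes b \mapsto \varphi_\vnA(a) \varphi_\vnB(b)$.

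For the reverse direction, given $(D, \varphi)$, define $\dD$ on $\intbw$ by assigning $\vnA, \vnB, D$ to white, black, and genuinely bicolored intervals respectively; the identity to all morphisms inside each of $\intw, \intb, \intgb$; and the maps $a \mapsto \varphi(a \otimes 1)$, $b \mapsto \varphi(1 \otimes b)$ to embeddings of white (resp. black) into genuinely bicolored intervals. Setting $\sigmaw = \id$ and $\sigmab = \id$ provides the required natural isomorphisms, and functoriality follows by a short composition check across the five possible morphism types of $\intbw$. Isotony is trivial since every morphism between genuinely bicolored intervals is sent to $\id_D$. Strong additivity reduces to a case analysis on the colors of $I, J, K$ in any covering $K = I \cup J$, all cases collapsing to trivial identities such as $D \vee D = D$ or $D \vee \varphi_\vnA(\vnA) = D$. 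Locality in the nontrivial cross-color cases follows directly from $\varphi_\vnA(\vnA), \varphi_\vnB(\vnB) \subseteq Z(D)$. The two constructions are manifestly inverse to each other.

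The main obstacle is the centralization argument: locality only guarantees commutation for subintervals of $K$ with disjoint interiors, so one needs the geometric observation that any white subinterval of a genuinely bicolored $K$ admits a complementary genuinely bicolored subinterval $J$ whose image under $\dD$ is all of $D$. Once this is secured, the remainder is essentially a functoriality and case-analysis verification.
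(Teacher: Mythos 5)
Your proposal is correct and follows essentially the same route as the paper: extract $D$ from a genuinely bicolored interval, use locality against a disjoint genuinely bicolored subinterval (whose induced map is $\id_D$ by local constancy) to force $\varphi_\vnA(\vnA)$ and $\varphi_\vnB(\vnB)$ into $Z(D)$, combine them via the universal property of the tensor product, and check the converse by direct verification of the axioms. The paper's write-up is terser on the converse direction, but the content is the same.
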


\begin{proof}
    Let $\vnA$ and $\vnB$ be constant conformal nets.
    It is then clear that any $(D,\varphi)$  as in the statement defines a locally constant defect. For the inclusion of a white resp. black, interval into a genuinely bicolored interval, the morphism $A\to D$, resp. $B\to D$, is given by the restriction of $\varphi$ to $A\otimes 1_B$, resp. to $1_A\otimes B$.
    
    \begin{figure}[h!]
        \centering
        \begin{tikzpicture}[scale=0.7]
    
            \draw (0,0) circle(3);
            
            \draw[line width=1.2mm] ([shift=(-90:3)]0,0) arc[start angle=-90, end angle=90, radius=3];

            \draw[dashed, thick] ([shift=(140:2.5)]0,0) arc[start angle=140, end angle=40, radius=2.5];
            \node at ([shift=(90:2)]0,0) {$I$};

            \draw[dashed, thick] ([shift=(-110:2.5)]0,0) arc[start angle=-110, end angle=-180, radius=2.5];
            \node at ([shift=(-150:2)]0,0) {$J$};
            
            \draw[dashed, thick] ([shift=(-100:3.5)]0,0) arc[start angle=-100, end angle=-330, radius=3.5];
            \node at ([shift=(-190:4)]0,0) {$K$};

        \end{tikzpicture}
        \caption{Locally constant defect}
        \label{fig:loccd}
    \end{figure}
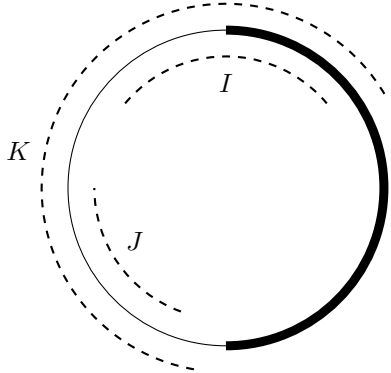

    Vice versa, let $\dD$ be a locally constant defect between $\vnA$ and $\vnB$.
    Let $I$ be a genuinely bicolored interval and $J$ be a white interval disjoint from $I$.
    Let $K$ be a genuinely bicolored interval that contains $I$ and $J$ (see \Cref{fig:loccd}).
    Then by the locality property, $\dD(J)=A$ and $\dD(I)=D$ are mutually commuting subalgebras of $\dD(K)=D$.
    In other words, the morphism $\vnA\to D$ induced by the embedding $J\hookrightarrow K$ maps $\vnA$ into the center of $\dD$.
    Similarly, $\vnB$ is mapped into the center of $\dD(I)$. From the universal property of the tensor product of commutative rings we then have that this datum is equivalent to that of an algebra homomorphism $\varphi\colon \vnA \otimes \vnB\to Z(D)$.

\end{proof}

Furthermore, when fusing locally constant defects, we get the following:

\begin{proposition} \label{prop:fusloccdef}
    Let $\vnA$, $\vnB$ and $\vnC$ be constant conformal nets and $\dD$ and $\dE$ be $\vnA$--$\vnB$ and $\vnB$--$\vnC$ defects.
    Then $\dD \dfus_\vnB \dE = \dD \otimes_\vnB \dE$, where $\dD \otimes_\vnB \dE$ acts on itself by left multiplication.
\end{proposition}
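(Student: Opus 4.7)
The plan is to directly unpack Definition \ref{def:fusvn} in the locally constant setting and observe that the two commutants appearing in it become trivial. First I would note that, by local constancy of $\dD$ and $\dE$, we have $\dD(I) = \dD$, $\dE(I) = \dE$ and $\vnB(J) = \vnB$ for the intervals $I, J$ appearing in Definition \ref{def:fusdef}. So the fusion is literally
\[
\dD \dfus_\vnB \dE \;=\; \bigl(\dD \cap (\vnB^\op)'\bigr) \vee \bigl(\vnB' \cap \dE\bigr)
\]
viewed as a subalgebra of $\EndK(\dD \otimes_\vnB \dE)$.

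Next I would use that $\vnB$ is commutative by \Cref{prop:constCNcomm}, so $\vnB^\op = \vnB$, and the (anti)homomorphisms $\vnB \to \dD$ and $\vnB \to \dE$ induced by $\sigmaw$, $\sigmab$ factor through the centers $Z(\dD)$ and $Z(\dE)$ respectively, by the previous proposition identifying locally constant defects with pairs $(D, \varphi\colon \vnA \otimes \vnB \to Z(D))$. Thus every element of $\dD$ commutes with the image of $\vnB^\op$ and every element of $\dE$ commutes with the image of $\vnB$, so the two commutants are trivial: $\dD \cap (\vnB^\op)' = \dD$ and $\vnB' \cap \dE = \dE$. Consequently $\dD \dfus_\vnB \dE$ is simply the subalgebra of $\EndK(\dD \otimes_\vnB \dE)$ generated by the left action of $\dD$ on the first tensor factor and the left action of $\dE$ on the second.

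Finally I would identify this subalgebra with $\dD \otimes_\vnB \dE$ acting on itself by left multiplication. Since $\vnB$ is commutative and its image is central in both $\dD$ and $\dE$, the tensor product $\dD \otimes_\vnB \dE$ inherits a well-defined algebra structure with $(d \otimes e)(d' \otimes e') = dd' \otimes ee'$; the two actions above are exactly the restrictions to $d \otimes 1$ and $1 \otimes e$ of its left regular representation. This representation is faithful, because applying left multiplication by $\sum_i d_i \otimes e_i$ to $1 \otimes 1$ returns $\sum_i d_i \otimes e_i$; hence the subalgebra generated by $\dD$ and $\dE$ in $\EndK(\dD \otimes_\vnB \dE)$ is precisely the image of $\dD \otimes_\vnB \dE$ under this faithful action, giving the stated equality.

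I expect the only real bookkeeping concern to be keeping track of the sidedness of the two actions (the antihomomorphism $\vnB \to \dD$ induced by the orientation-reversing map $f$ in Definition \ref{def:fusdef}, versus the homomorphism $\vnB \to \dE$), and checking that the two resulting actions on $\dD \otimes_\vnB \dE$ commute; but since $\vnB$ is commutative this reduces to the standard fact that left multiplication on one factor of a balanced tensor product commutes with left multiplication on the other.
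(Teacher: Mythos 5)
Your proposal is correct and follows essentially the same route as the paper: both reduce the claim to showing $\dD \vee \dE = \dD \otimes_\vnB \dE$ using centrality of the image of $\vnB$ (the paper states the reduction directly, while you additionally unpack why the two commutants in \Cref{def:fusvn} are trivial), and both then identify $\dD \vee \dE$ with the image of the left regular representation of $\dD \otimes_\vnB \dE$, proving injectivity by evaluating at $1 \otimes_\vnB 1$.
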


\begin{proof}
    Since $\vnB$ is both contained in the center of $\dD$ and in the center of $\dE$, we have $\dD \dfus_\vnB \dE=\dD \vee \dE$, so we have to prove that $\dD \otimes_\vnB \dE = \dD \vee \dE$.
    In order to do so, we exhibit a natural morphism $\dD \otimes_\vnB \dE \to \EndK(\dD \otimes_\vnB \dE)$, and show that it is an isomorphism onto $\dD \vee \dE$.
    
    Consider the map
    \[
    \rho\colon \dD \times \dE\to  \EndK(\dD \otimes_\vnB \dE)
    \]
    defined by
    \[
    \forall (d,\delta,e,\epsilon) \in \dD^2 \times \dE^2
    \rho_{(d,e)}(\delta\otimes_\vnB \epsilon)=d\delta\otimes_\vnB  e \epsilon.
    \]
    
    The map $\rho$ is manifestly bilinear and for any $(d,\delta,b,e,\epsilon) \in \dD^2 \times \vnB \times \dE^2$, we have:
    \[
    \rho_{(db,e)}(\delta \otimes_\vnB \epsilon)
    = db\delta \otimes_\vnB e \epsilon
    = d\delta b\otimes_\vnB e \epsilon
    = d\delta \otimes_\vnB b e \epsilon
    = \rho_{(d,be)}(\delta \otimes_\vnB \epsilon),
    \]
    where we use the fact that $\vnB$ is a subalgebra of the center of $\dD$ and of the center of $\dE$.
    Moreover, for any $(d,\delta,e,\epsilon) \in \dD^2 \times \dE^2$, we have $\rho_{(d\delta, e\epsilon)} = \rho_{(d,e)} \circ \rho_{(\delta,\epsilon)}$.
    Therefore, $\rho$ defines an algebra homomorphism $\rho\colon \dD\otimes_\vnB \dE\to \EndK(D \otimes_\vnB E)$.
    
    The image of $\rho$ is manifestly generated by left multiplications by elements of $\dD$ and of $\dE$.
    Hence, $\rho$ takes values in $\dD \vee \dE$ and, since the image of $\rho$ contains all of these multiplications, and the latter generate $\dD \vee \dE$, we have that $\rho\colon \dD\otimes_\vnB \dE\to \dD\vee \dE$ is surjective.
    It remains to show that $\rho$ is injective.
    This is readily seen by noticing that for any $\xi\in \dD\otimes_\vnB\dE$ one has $\rho_\xi(1\otimes_\vnB 1)=\xi$.
    
\end{proof}

For any constant defects $\dD$ and $\dE$, a $\dD$--$\dE$ sector is a $\dD$--$\dE$ bimodule.
In particular, we impose that natural compatibility isomorphisms are identities.
When focusing on locally constant sectors, fusions of sectors are tensor products over algebras since our definition reduces to the one of \cite{CN-III}.



In the locally constant case, an intertwiner is a bimodule homomorphism.

Now, to summarize, we have built the following subcategory of $\catCN$:

\begin{definition}
    The tricategory $\catconstCN$ is the one whose objects are constant conformal nets, $1$-morphisms are locally constant defects, $2$-morphisms are locally constant sectors and $3$-morphisms are intertwiners.
    Compositions are defined by \Cref{prop:fusloccdef},
    tensor products over algebras,
    \Cref{def:tfusint}, \Cref{def:hfusint} and \Cref{def:vfusint}.
    Moreover, the identity of a constant conformal net $\vnA$ is itself seen as an $\vnA$--$\vnA$ defect, the identity of a defect $\dD$ is itself seen as a $\dD$--$\dD$ sector and the identity of a sector $\hilb$ is the identity map $\mathrm{id}_\hilb$.
\end{definition}

\subsection{The tricategory of commutative algebras} \label{ssec:comalg}

In this subsection, we draw inspiration from Carqueville and Müller \cite[Theorem 5.21]{Orbifold} to define a tricategory $\comalg$.
For a complete definition of a tricategory, see Gurski’s thesis \cite[Definition 3.1.2 and Appendix B.1]{Tricategory} and for a complete definition of a bicategory, see Bénabou \cite{Bicategory}.
The tricategory $\comalg$ can be seen as a decategorification of the tetracategory of braided tensor categories introduced by Freed, Hopkinks, Lurie and Teleman \cite{topQFTcompLieg}. Indeed, the description of this tetracategory in the form recalled by Fiorenza and Valentino in \cite[Section 8.1]{FiorenzaValentino} is another source of inspiration for the definition of the tricategory $\comalg$.

\subsubsection{The bicategory of morphisms} \label{subsec:bic}

To define a tricategory, we must first define a bicategory which needs itself to define a category.
This subsection aims at providing the bicategory of morphisms which is an intermediary step in the creation of a tricategory.

\begin{definition} \label{def:homcat}
    Let $\vnA$ and $\vnB$ be commutative algebras.
    Let $\dD$ and $\dE$ be associative algebras together with homomorphisms $\alpha\colon \vnA \otimes \vnB \to Z(\dD)$ and $\beta\colon \vnA \otimes \vnB \to Z(\dE)$.
    We define a category $\homcat(\dD,\alpha,\dE,\beta)$ (or $\homcat(\dD,\dE)$ when $\alpha$ and $\beta$ are clear from the context) as follows:

    \begin{enumerate}
        \item Objects: An object is a $\dD$--$\dE$ bimodule $\hilb$ such that the actions of $\vnA \otimes \vnB$ induced by $\alpha$ and $\beta$ are equal.
        \item Morphisms: A morphism between $\hilb$ and $\hilbK$ is a bimodule homomorphism.
        \item Composition: The composition of morphisms $\phi \colon \hilb \to \hilb'$ and $\psi \colon \hilb' \to \hilb''$ is $\psi \circ \phi \colon \hilb \to \hilb''$.
        \item Identity: The identity of $\hilb$ is $\id_\hilb$, the identity map.
   \end{enumerate}
\end{definition}

\begin{definition} \label{def:comphomcat}
    Let $\vnA$ and $\vnB$ be commutative algebras and $(\dD,\alpha)$, $(\dE,\beta)$ and $(\dF,\gamma)$ be couples of associative algebras and homomorphisms from $\vnA \otimes \vnB$ to the center of $\dD$, $\dE$ and $\dF$ respectively.
    We define the composition functor $\otimes_\dE$ as follows:
    \[
    \otimes_\vnB \colon
    \begin{array}{ccc}
        \homcat(\dE,\dF) \times \homcat(\dD,\dE) & \longrightarrow & \homcat(\dD,\dF) \\
        (\hilbK,\hilb) & \longmapsto & \hilb \otimes_\dE \hilbK \\
        (\psi\colon \hilbK \to \hilbK',\phi\colon \hilb\to\hilb') & \longmapsto & \phi \otimes_\dE \psi
    \end{array},
    \]
    where the tensor products over $\dE$ are well-defined since we consider left and right $\dE$-modules and $\dE$-module homomorphisms.
\end{definition}

\begin{definition} \label{def:hombicat}
    Let $\vnA$ and $\vnB$ be commutative algebras.
    We define a bicategory $\hombicat(\vnA,\vnB)$ of morphisms between them as follows:

    \begin{enumerate}
        \item Objects: An object is an associative algebra $\dD$ together with a homomorphism from $\vnA \otimes \vnB$ to $Z(\dD)$.
        
        \item Hom-category: The hom-category associated to two objects $(\dD,\alpha)$ and $(\dE,\beta)$ is $\homcat(\dD,\alpha,\dE,\beta)$ defined in \Cref{def:homcat}.

        \item Composition functor: \label{def:hombicat:comp}
        For objects $(\dD,\alpha)$, $(\dE,\beta)$ and $(\dF,\gamma)$, the composition functor $\otimes_\dE\colon \homcat(\dE,\dF) \times \homcat(\dD,\dE) \to \homcat(\dD,\dF)$ is defined in \Cref{def:comphomcat}.

        \item Identity arrow: \label{def:hombicat:idarr}
        For each object $(\dD,\alpha)$, its identity arrow is $\dD$ seen as a $\dD$--$\dD$ bimodule.

        \item Associativity morphism: \label{def:hombicat:pent}
        For each quadruple of objects $(\dD,\alpha)$, $(\dE,\beta)$, $(\dF,\gamma)$ and $(\dG,\delta)$, the associativity natural isomorphism $a$ associates to any $(\hilb,\hilb',\hilb'') \in \homcat(\dF,\dG) \times \homcat(\dE,\dF) \times \homcat(\dD,\dE)$ the following isomorphism:
        \[
        \begin{array}{ccc}
            (\hilb \otimes_\dE \hilb') \otimes_\dF \hilb'' & \longrightarrow & \hilb \otimes_\dE (\hilb' \otimes_\dF \hilb'') \\
            (h \otimes_\dE h') \otimes_\dF h'' & \longmapsto & h \otimes_\dE (h' \otimes_\dF h'')
        \end{array}.
        \]

        \item Left and right identities: \label{def:hombicat:lrid}
        For each pair of objects $(\dD,\alpha)$ and $(\dE,\beta)$, the left identity $l$ is the natural isomorphism that associates to any $(\hilb,1)$ from $\homcat(\dD,\dE) \times \unitcat$ the following isomorphism:
        \[
        \begin{array}{ccc}
            \hilb & \longrightarrow & \dD \otimes_\dD \hilb \\
            h & \longmapsto & 1 \otimes_\dD h
        \end{array}
        \]
        
        The right identity $r$ is defined similarly. 
    
    \end{enumerate}
\end{definition}

\begin{proof}[Proof of the well-definition]

The associativity and identity coherence axioms from \cite{Bicategory} are satisfied by \Cref{def:hombicat} thanks to the commutativity of the following diagrams:
\[
\begin{tikzcd}[row sep=1.5cm, column sep=0.7cm]
((\hilb \otimes_{\dD'} \hilb') \otimes_{\dD''} \hilb'') \otimes_{\dD'''} \hilb''' \arrow[rr, "a \otimes_{\dD'''} \id"] \arrow[d, "a"'] &                                                                                  & (\hilb \otimes_{\dD'} (\hilb' \otimes_{\dD''} \hilb'')) \otimes_{\dD'''} \hilb''' \arrow[d, "a"]            \\
(\hilb \otimes_{\dD'} \hilb') \otimes_{\dD''} (\hilb'' \otimes_{\dD'''} \hilb''') \arrow[rd, "a"', shorten >=50pt, pos=0.2]                          &                                                                                  & \hilb \otimes_{\dD'} ((\hilb' \otimes_{\dD''} \hilb'') \otimes_{\dD'''} \hilb''') \arrow[ld, "\id \otimes_{\dD'} a", shorten >=50pt, pos=0.2] \\
& |[label={[overlay]\hilb \otimes_{\dD'} (\hilb' \otimes_{\dD''} (\hilb'' \otimes_{\dD'''} \hilb'''))}]| &                                                                                                           
\end{tikzcd}
\]

\[
\begin{tikzcd}[row sep=1.5cm, column sep=0.7cm]
(\hilb \otimes_{\dD'} \dD') \otimes_{\dD'} \hilb' \arrow[rd, "r \otimes_{\dD'} \id"'] \arrow[rr, "a"] &                             & \hilb \otimes_{\dD'} (\dD' \otimes_{\dD'} \hilb') \arrow[ld, "\id \otimes_{\dD'} l"] \\
& \hilb \otimes_{\dD'} \hilb' &                                                                                   
\end{tikzcd}
\]
for any $(\hilb,\hilb',\hilb'',\hilb''')$ from $\homcat(\dD,\dD') \times \homcat(\dD',\dD'') \times \homcat(\dD'',\dD''') \times \homcat(\dD''',\dD'''')$, any objects $\dD$, $\dD'$, $\dD''$, $\dD'''$ and $\dD''''$ from $\hombicat(\vnA, \vnB)$ and any commutative algebras $\vnA$ and $\vnB$.

\end{proof}

\subsubsection{The tricategory} \label{subsec:tric}

Now, we have all ingredients necessary to make a definition of a tricategory.

\begin{definition} \label{def:comphombicat}
    Let $\vnA$, $\vnB$ and $\vnC$ be commutative algebras.
    We define a functor $\otimes_\vnB$ from $\hombicat(\vnB,\vnC) \times \hombicat(\vnA,\vnB)$ to $\hombicat(\vnA,\vnC)$ as follows:
    
    \begin{enumerate}
        \item The functor associates to any object $(\dE,\beta,\dD,\alpha) \in \hombicat(\vnB,\vnC) \times \hombicat(\vnA,\vnB)$ the object $(\dD \otimes_\vnB \dE, \alpha * \beta) \in \hombicat(\vnA,\vnC)$ where $\alpha * \beta$ maps any $(a,c) \in \vnA \otimes \vnC$ to the element $\alpha(a,1) \otimes_\vnB \beta(1,c) \in Z(\dD \otimes_\vnB \dE)$.

        \item For any objects $(\dE,\beta,\dD,\alpha)$ and $(\dG,\delta,\dF,\gamma)$ from $\hombicat(\vnB,\vnC) \times \hombicat(\vnA,\vnB)$, the functor $\otimes_\vnB$ associates the functor $F(\dG,\dE,\dF,\dD)$ from $\homcat(\dG,\dE) \times \homcat(\dF,\dD)$ to $\homcat(\dD \otimes_\vnB \dE, \dF \otimes_\vnB \dG)$ which maps any object $(\hilbK,\hilb)$ to $\hilb \otimes_\vnB \hilbK$ (which is well-defined since there is a unique canonical $\vnB$-module structure on $\hilb$ and on $\hilbK$) and any morphism $(\psi\colon \hilbK\to\hilbK', \phi\colon \hilb\to\hilb')$ to $\phi \otimes_\vnB \psi$.

        \item For each $(\dE,\beta,\dD,\alpha) \in \hombicat(\vnB,\vnC) \times \hombicat(\vnA,\vnB)$, the functor $\otimes_\vnB$ associates the identity of $\dD \otimes_\vnB \dE$.

        \item \label{def:it:exchlaw}
        For any objects $(\dE,\beta,\dD,\alpha)$, $(\dE',\beta',\dD',\alpha')$ and $(\dE'',\beta'',\dD'',\alpha'')$ from $\hombicat(\vnB,\vnC) \times \hombicat(\vnA,\vnB)$, the functor $\otimes_\vnB$ associates a natural transformation from the functor $\otimes_{\dD' \otimes_\vnB \dE'} \circ  F(\dE'',\dE',\dD'',\dD') \times F(\dE',\dE,\dD',\dD)$ to the functor $F(\dE'',\dE,\dD'',\dD) \circ \otimes_{\dE'} \times \otimes_{\dD'}$ that maps any object $(\hilbK',\hilb',\hilbK,\hilb)$ to the following isomorphism:
        \[
        \Phi\colon
        \begin{array}{ccc}
            (\hilb \otimes_\vnB \hilbK) \otimes_{\dD' \otimes_\vnB \dE'} (\hilb' \otimes_\vnB \hilbK') & \rightarrow & (\hilb \otimes_{\dD'} \hilb') \otimes_\vnB (\hilbK \otimes_{\dE'} \hilbK')\\
            (h \otimes_\vnB k) \otimes_{\dD' \otimes_\vnB \dE'} (h' \otimes_\vnB k') & \mapsto &  (h \otimes_{\dD'} h') \otimes_\vnB (k \otimes_{\dE'} k')
        \end{array}.
        \]
    \end{enumerate}
\end{definition}

\begin{proof}[Proof of the well-definition]

One can easily check that the axioms from \cite{Bicategory} are satisfied by the functor of bicategories defined in \Cref{def:comphombicat}.
Indeed, for any $\hilb$ from $\homcat(\dD,\dD')$, $\hilb'$ from $\homcat(\dD',\dD'')$, $\hilb''$ from $\homcat(\dD'',\dD''')$, $\hilbK$ from $\homcat(\dE,\dE')$, $\hilbK'$ from $\homcat(\dE',\dE'')$, $\hilbK''$ from $\homcat(\dE'',\dE''')$, any $\dD$, $\dD'$ and $\dD''$ from $\hombicat(\vnA,\vnB)$, any $\dE$, $\dE'$ and $\dE''$ from $\hombicat(\vnB,\vnC)$ and any commutative algebras $\vnA$, $\vnB$ and $\vnC$, the following diagram commutes:
\[
\begin{tikzcd}[row sep=2cm]
\hilb \otimes_\vnB \hilbK                                                                                                                & (\hilb\otimes_{\dD'}\dD') \otimes_\vnB (\hilbK\otimes_{\dE'}\dE') \arrow[l, "r \otimes_\vnB r"]          \\
(\hilb \otimes_\vnB \hilbK) \otimes_{\dD' \otimes_\vnB \dE'} (\dD' \otimes_\vnB \dE') \arrow[r, "\id \otimes_\vnB \id"'] \arrow[u, "r"] & (\hilb \otimes_\vnB \hilbK) \otimes_{\dD' \otimes_\vnB \dE'} (\dD' \otimes_\vnB \dE') \arrow[u, "\Phi"']
\end{tikzcd}
\]
and so do its left analogous and the following diagram:
\begin{center}
\begin{sideways}
\begin{tikzcd}[row sep=2cm, column sep=0.26cm]
(\hilb \otimes_\vnB \hilbK) \otimes_{\dD' \otimes_\vnB \dE'}
((\hilb' \otimes_\vnB \hilbK')
\otimes_{\dD'' \otimes_\vnB \dE''}
(\hilb'' \otimes_\vnB \hilbK''))
\arrow[d, "\id \otimes_\vnB \Phi"']
&
((\hilb \otimes_\vnB \hilbK)
\otimes_{\dD' \otimes_\vnB \dE'}
(\hilb' \otimes_\vnB \hilbK'))
\otimes_{\dD'' \otimes_\vnB \dE''}
(\hilb'' \otimes_\vnB \hilbK'')
\arrow[d, "\Phi \otimes_\vnB \id"] \arrow[l, "a"', pos=0.4]
\\
(\hilb \otimes_\vnB \hilbK) \otimes_{\dD' \otimes_\vnB \dE'}
((\hilb' \otimes_{\dD''} \hilb'')
\otimes_\vnB
(\hilbK' \otimes_{\dE''} \hilbK''))
\arrow[d, "\Phi"']
&
((\hilb \otimes_{\dD'} \hilb')
\otimes_\vnB
(\hilbK \otimes_{\dE'} \hilbK'))
\otimes_{\dD'' \otimes_\vnB \dE''}
(\hilb'' \otimes_\vnB \hilbK'')
\arrow[d, "\Phi"]                                 
\\
((\hilb \otimes_{\dD'} (\hilb' \otimes_{\dD''} \hilb''))
\otimes_\vnB
(\hilbK \otimes_{\dE'} (\hilbK' \otimes_{\dE''} \hilbK'')))
&
((\hilb \otimes_{\dD'} \hilb') \otimes_{\dD''} \hilb'')
\otimes_\vnB
((\hilbK \otimes_{\dE'} \hilbK') \otimes_{\dE''} \hilbK'')
\arrow[l, "a \otimes_\vnB a"']                                  
\end{tikzcd}
\end{sideways}
\end{center}

\end{proof}

\begin{definition}[The tricategory $\comalg$] \label{def:comalg}

    We define the tricategory of commutative algebras $\comalg$ as follows:
    \begin{enumerate}
        \item Objects: An object is a commutative algebra.

        \item Hom-bicategory: The hom-bicategory associated to two objects $\vnA$ and $\vnB$ is $\hombicat(\vnA,\vnB)$ defined in \Cref{def:hombicat}.

        \item Composition: The composition functor between $\hombicat(\vnA,\vnB)$ and $\hombicat(\vnB,\vnC)$ is defined by \Cref{def:comphombicat} for any objects $\vnA$, $\vnB$ and $\vnC$.

        \item Identity $1$-morphisms: For each object $\vnA$ its identity is $(\vnA,\mu)$ where $\mu$ is the multiplication map of $\vnA$.

        \item Compositors: For any given three $1$-morphisms $(\dD,\alpha\colon \vnA \otimes \vnA' \to Z(\dD))$, $(\dE,\beta\colon \vnA' \otimes \vnA'' \to Z(\dE))$ and $(\dF,\gamma\colon \vnA'' \otimes \vnA''' \to Z(\dF))$, the compositors are $a_{\dD,\dE,\dF} = (\dD \otimes_{\vnA'} \dE) \otimes_{\vnA''} \dF$ and $a_{\dD,\dE,\dF}^{{\bbox}} = \dD \otimes_{\vnA'} (\dE \otimes_{\vnA''} \dF)$.

        \item Associator modifications: For any objects $\vnA$, $\vnA'$, $\vnA''$ and $\vnA'''$, for any $1$-morphisms $(\dD,\alpha)$ and $(\dD',\alpha')$ from $\vnA$ to $\vnA'$, $(\dE,\beta)$ and $(\dE',\beta')$ from $\vnA'$ to $\vnA''$ and $(\dF,\gamma)$ and $(\dF',\gamma')$ from $\vnA''$ to $\vnA'''$ and for any $\dD$--$\dD'$ bimodule $\hilb$, $\dE$--$\dE'$ bimodule $\hilb'$ and $\dF$--$\dF'$ bimodule, the interchanger associators
        \[
        a_{\hilb,\hilb',\hilb''} \colon \hilb_L \otimes_{a_{\dD',\dE',\dF'}} a_{\dD',\dE',\dF'} \simeq a_{\dD,\dE,\dF} \otimes_{a_{\dD,\dE,\dF}^{\bbox}} \hilb_R
        \]
        and
        \[
        a_{\hilb,\hilb',\hilb''}^{\bbox} \colon \hilb_R \otimes_{a_{\dD',\dE',\dF'}^{\bbox}} a_{\dD',\dE',\dF'}^{\bbox} \simeq a_{\dD,\dE,\dF}^{\bbox} \otimes_{a_{\dD,\dE,\dF}} \hilb_L
        \]
        are the obvious ones (see remark below), where $\hilb_L = (\hilb \otimes_{\vnA'} \hilb') \otimes_{\vnA''} \hilb''$ and $\hilb_R = \hilb \otimes_{\vnA'} (\hilb' \otimes_{\vnA''} \hilb'')$.
        
        \item Left and right unitors: For each $1$-morphism $(\dD,\alpha\colon\vnA\otimes\vnB\to Z(\dD))$, we have the equality of left and right unitors $l_\dD = l_\dD^{\bbox} = r_\dD = r_\dD^{\bbox} = \dD$.
        
        \item Unit modifications: For any $2$-morphism $\hilb$ between $1$-morphisms $(\dD,\alpha)$ and $(\dE,\beta)$, we have $l_\hilb$, $l_\hilb^{\bbox}$, $r_\hilb$ and $r_\hilb^{\bbox}$ are the obvious $3$-isomorphisms (see remark below).

        \item Pentagonator: For any objects $\vnA$, $\vnA'$, $\vnA''$, $\vnA'''$ and $\vnA''''$, any $1$-morphisms $(\dD,\alpha)$ from $\vnA$ to $\vnA'$, $(\dE,\beta)$ from $\vnA'$ to $\vnA''$, $(\dF,\gamma)$ from $\vnA''$ to $\vnA'''$ and $(\dG,\delta)$ from $\vnA'''$ to $\vnA''''$, the pentagonator $\pi_{\dD,\dE,\dF,\dG}$ is an obvious isomorphism (see remark below).

        \item Unitor modifications and interchanger: For any given two $1$-morphisms $(\dD,\alpha\colon \vnA \otimes \vnB \to Z(\dD))$ and $(\dE,\beta\colon \vnB \otimes \vnC \to Z(\dE))$, the left unitor modification $\lambda_{\dD,\dE}$, right unitor modification $\mu_{\dD,\dE}$ and the interchanger $\rho_{\dD,\dE}$ are obvious $3$-isomorphisms (see remark below).
    \end{enumerate}
\end{definition}

\begin{remark}
    For any algebras $\dD$ and any left $\dD$-bimodule $\hilb$, we can naturally define the following isomorphism:
    \[
    \Phi\colon
    \begin{array}{ccc}
        \dD \otimes_\dD \hilb & \longrightarrow & \hilb \\
        d \otimes_\dD h & \longmapsto & d \cdot h
    \end{array}.
    \]
    where $\Phi$ is well-defined since $d \otimes_\dD h = 1 \otimes_\dD d \cdot h$ for any $(d,h) \in \dD \times \hilb$.
    The same stands symmetrically for right modules.
    
    Similarly, for any $\dD$--$\dE$, $\dE$--$\dF$ and $\dF$--$\dG$ bimodule $\hilb$, $\hilb'$ and $\hilb''$, we have the following isomorphism:
    \[
    \begin{array}{ccc}
        \hilb \otimes_\dE (\hilb' \otimes_\dF \hilb'') & \to & (\hilb \otimes_\dE \hilb') \otimes_\dF \hilb'' \\
        h \otimes_\dE (h' \otimes_\dF h'') & \to & (h \otimes_\dE h') \otimes_\dF h''
    \end{array}.
    \]

    These isomorphisms are considered as obvious and we do not make them explicit.
\end{remark}

The verification of the well-definition of $\comalg$ is as straightforward as the verification of the previous categorical definitions,.
One can easily check thanks to the remark above that the axioms from \cite[Definition 3.1.2]{Tricategory} are satisfied by $\comalg$.

A direct comparison between the description of $\catconstCN$ obtained in \Cref{ssec:loccons} and the definition of $\comalg$ given in this section leads to our main result:

\begin{theorem}
    The tricategories $\catconstCN$ and $\comalg$ are isomorphic.
\end{theorem}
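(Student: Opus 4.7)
The plan is to construct an isomorphism of tricategories $F\colon \catconstCN \to \comalg$ explicitly and exhibit its inverse. On each type of cell the assignment is dictated by the results of \Cref{ssec:loccons}: a locally constant conformal net $\cfA$ is sent to the commutative algebra $\vnA$ provided by \Cref{prop:constCNcomm}; a locally constant $\vnA$--$\vnB$ defect is sent to the pair $(\dD,\varphi\colon \vnA\otimes \vnB\to Z(\dD))$ produced by the proposition characterizing locally constant defects; a locally constant $\dD$--$\dE$ sector is sent to its underlying $\dD$--$\dE$ bimodule, with the matching conditions on the $\vnA\otimes \vnB$ actions required by \Cref{def:homcat}; and a locally constant intertwiner is sent to its underlying bimodule homomorphism. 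The candidate inverse $G$ sends a commutative algebra $\vnA$ to the locally constant conformal net that assigns $\vnA$ to every interval and the identity to every embedding, sends a pair $(\dD,\varphi)$ to the corresponding locally constant defect as built in the proof of that same proposition, and similarly on higher cells. On each type of cell these assignments are manifestly mutually inverse bijections.

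I would then verify that $F$ strictly respects all composition operations. For horizontal composition of $1$-morphisms this is exactly the content of \Cref{prop:fusloccdef}: $\dD\dfus_\vnB \dE = \dD\otimes_\vnB \dE$, which is by definition the composition in $\comalg$ from \Cref{def:comphombicat}. For the two fusions of sectors (\Cref{def:vfussec,def:hfussec}), locally constant sectors are bimodules and their fusions reduce to tensor products over the defect algebra and over the base commutative algebra respectively, matching the composition functor of each hom-bicategory from \Cref{def:comphomcat} and the composition functor between hom-bicategories from \Cref{def:comphombicat}. The three fusions of intertwiners (\Cref{def:tfusint,def:vfusint,def:hfusint}) are composition of linear maps and the induced maps on tensor products of bimodules, which again match on both sides. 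Identity cells also correspond: the identity defect of $\vnA$ is $\vnA$ acting on itself by multiplication, matching the identity $1$-morphism $(\vnA,\mu)$ of \Cref{def:comalg}, and similarly for identity $2$- and $3$-morphisms.

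Finally, I would check that the coherence data match: associators, left and right unitors of the hom-bicategories (\Cref{def:hombicat}), pentagonator and interchanger, and the unit and associator modifications listed in \Cref{def:comalg}. On both tricategories all of these higher coherences are built from the canonical associativity and unit isomorphisms of tensor products of bimodules; on the $\catconstCN$ side they are inherited from the coherence data of $\catCN$, restricted to the locally constant subtricategory, where every structural map reduces to one of these canonical bimodule isomorphisms, while on the $\comalg$ side they are declared to be the ``obvious'' isomorphisms having precisely the same description. The main obstacle will be the bookkeeping of going through each piece of coherence data in \Cref{def:comalg} and matching it with its counterpart in $\catconstCN$, but each individual check reduces to verifying that two canonical bimodule tensor product isomorphisms agree, which is routine. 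Since the cell-level assignments are mutually inverse bijections and all compositions, identities and coherences match strictly, we obtain an isomorphism of tricategories rather than merely an equivalence.
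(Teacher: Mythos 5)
Your proposal is correct and follows exactly the route the paper takes: the paper's own justification is a one-line ``direct comparison'' between the description of $\catconstCN$ from \Cref{ssec:loccons} and the definition of $\comalg$, and your write-up simply makes that comparison explicit cell by cell, using \Cref{prop:constCNcomm}, the characterization of locally constant defects, and \Cref{prop:fusloccdef} in the same way. If anything, your version supplies more detail than the paper does, particularly on the matching of compositions and coherence data.
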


\section{Conclusion}

We introduced the tricategory $\catCN$ as an algebraic framework for studying the algebraic properties of circle-based conformal nets.
This setting was designed to be entirely concrete --- in order to avoid a mix between circle-based and coordinate-free definitions --- and still flexible thanks to the extensive use of categorical tools.
We proposed a way to implement solitons as defects in $\catCN$.
Furthermore, we provided a complete description of the locally constant subtricategory $\catconstCN$ of $\catCN$, identifying it with the tricategory $\comalg$.

\section*{Acknowledgement}

I am grateful to my supervisor Domenico Fiorenza for introducing me to the fascinating field of conformal nets, which allowed me to discover a new facet of active mathematical research.
I also thank Matthias Ludewig for the enriching and very helpful discussions we had.
Finally I would like to thank the Department of Mathematics "Guido Castelnuovo" at Sapienza University of Rome where I carried out this internship.

\printbibliography

\end{document}